\newtheorem{theorem}{Theorem}
\newtheorem{corollary}[theorem]{Corollary}
\newtheorem{lemma}[theorem]{Lemma}
\theoremstyle{remark}
\newtheorem*{remark*}{Remark}
\newtheorem{remark}[theorem]{Remark}
\numberwithin{equation}{section}
\numberwithin{theorem}{section}
\crefname{figure}{Figure}{Figures}
\theoremstyle{plain}
\newtheorem*{theorem*}{Theorem}
\crefname{theorem}{Theorem}{Theorems}
\crefname{lemma}{Lemma}{Lemmas}
\renewcommand{\epsilon}{\varepsilon}
\newcommand{\cE}{\mathcal{E}}
\renewcommand{\pmod}[1]{\, (\mathrm{mod} {\, #1})}
\newcommand{\re}{\mathrm{Re}}
\newcommand{\im}{\mathrm{Im}}
\newcommand{\R}{\mathbb{R}}
\newcommand{\asif}[1]{\textcolor{red}{\textbf{Asif:} #1}} 
\title[Refinements to the prime number theorem for arithmetic progressions]{Refinements to the prime number theorem for arithmetic progressions}
\author{Jesse Thorner}
\address{Department of Mathematics, University of Illinois, Urbana, IL 61801, United States}
\email{jesse.thorner@gmail.com}
\author{Asif Zaman}
\address{Department of Mathematics, University of Toronto, 40 St. George Street, Toronto, ON, Canada M5S 2E4}
\email{zaman@math.toronto.edu}
\begin{document}

\begin{abstract}
	We prove a version of the prime number theorem for arithmetic progressions that is uniform enough to deduce the Siegel--Walfisz theorem, Hoheisel's asymptotic for intervals of length $x^{1-\delta}$, a Brun--Titchmarsh bound, and Linnik's bound on the least prime in an arithmetic progression as corollaries.  Our proof uses the Vinogradov--Korobov zero-free region, a log-free zero density estimate, and the Deuring--Heilbronn zero repulsion phenomenon.   Improvements exist when the modulus is sufficiently powerful.
\end{abstract}

\maketitle
\vspace{-6mm}
\section{Introduction and statement of results}

\subsection{Main result} Let $q\geq 2$.  The prime number theorem for arithmetic progressions states that as $x\to\infty$, the primes $p\leq x$ equidistribute in the $\varphi(q)$ residue classes $a\pmod{q}$ with $\gcd(a,q)=1$.  Under the generalized Riemann hypothesis (GRH), primes in intervals of length $h$ equidistribute in residue classes modulo $q$ once $h/\varphi(q) \geq \sqrt{x}(\log x)^2$; that is,
{\small\begin{equation}
\label{eqn:psi}
\sum_{\substack{x-h<p\leq x\\ p\equiv a\pmod{q}}}\log p=\frac{h}{\varphi(q)}+O(\sqrt{x}(\log x)^2),\qquad x\geq q.
\end{equation}}%

Unconditional progress towards \eqref{eqn:psi} has classical origins; the latest benchmarks are described in \cref{subsec:Literature}. Each such result usually addresses a specific feature: long intervals (e.g., $h=x$), short intervals (e.g., $h=x^{1-\delta}$), small moduli (e.g., $q \leq (\log x)^{100}$), or large moduli (e.g., $q \leq x^{\delta}$).  We prove a version of the prime number theorem for arithmetic progressions that interpolates these benchmarks, giving the most uniform progress towards \eqref{eqn:psi} that current results on Dirichlet $L$-functions permit.  A major obstacle to uniformity is the possible existence of real zeros of Dirichlet $L$-functions near $s=1$.  It follows from work of McCurley \cite{McCurley} that $\prod_{\chi\pmod{q}}L(s,\chi)\neq 0$ in the region $\re(s)\geq 1-1/(13\log(q(|\im(s)|+3)))$ apart from at most one real zero, say $\beta_1$.  If $\beta_1$ exists, then it is simple, and there exists a unique nontrivial real Dirichlet character $\chi_1\pmod{q}$ such that $L(\beta_1,\chi_1)=0$.

\begin{theorem}
\label{thm:main}
Let $q\geq 2$ and $a$ be coprime integers and $4 \leq h \leq x$.  Define $\lambda$ and $\theta$ by
\[
\lambda=\lambda(x,q,a,h):=\begin{cases}
	\displaystyle \frac{1}{h}\int_{x-h}^{x}(1- \chi_1(a)t^{\beta_1-1}) dt&\mbox{if $\beta_1$ exists,}\\
	1&\mbox{otherwise,}
	\end{cases}
	\qquad \theta:=\begin{cases}
		32/37&\mbox{if $\beta_1$ exists,}\\
		7/12&\mbox{otherwise.}
	\end{cases}
\]
For all $0<\epsilon<1-\theta$, there is a constant $\Cl[abcon]{main}=\Cr{main}(\epsilon)>0$ such that if $\lambda h/\varphi(q) \geq x^{\theta+\epsilon}$, then
\[
\sum_{\substack{x-h<p\leq x \\ p\equiv a\pmod{q}}}\log p =  \frac{\lambda h}{\varphi(q)}  \Big[1+O_{\epsilon}\Big( \exp\Big( \frac{- \Cr{main} \log x}{\log q + (\log\frac{x}{h})^{\frac{2}{3}}(\log^+\log\frac{x}{h})^{\frac{1}{3}}+(\log x)^{\frac{2}{5}}(\log\log x)^{\frac{1}{5}}}\Big) \Big)\Big].
\]
The implied constant and $\Cr{main}$ are effectively computable, and $\log^+(u) = \max\{0,\log u\}$.
\end{theorem}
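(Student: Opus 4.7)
\emph{Proof plan.} The plan is to express the left side via orthogonality of Dirichlet characters and the truncated explicit formula for $\psi(x,\chi) = \sum_{n \leq x} \Lambda(n)\chi(n)$, isolate the contribution of the exceptional zero so that it combines with the principal-character term to form the main term $\lambda h/\varphi(q)$, and then bound the contribution of the remaining zeros of Dirichlet $L$-functions using the Vinogradov--Korobov zero-free region together with a log-free zero density estimate (sharpened by the repulsion of $(\beta_1,\chi_1)$ when it exists).

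First, by orthogonality of characters modulo $q$,
\[
\sum_{\substack{x-h<n\leq x \\ n\equiv a\pmod q}}\Lambda(n) = \frac{1}{\varphi(q)}\sum_{\chi\pmod q}\overline{\chi}(a)\bigl(\psi(x,\chi)-\psi(x-h,\chi)\bigr),
\]
and the truncated explicit formula with a cutoff $T$ to be optimized gives
\[
\psi(x,\chi)-\psi(x-h,\chi) = \delta_{\chi=\chi_0}\,h - \sum_{\substack{\rho=\beta+i\gamma \\ |\gamma|\leq T}}\int_{x-h}^{x} t^{\rho-1}\,dt + O\!\left(\frac{x(\log qxT)^2}{T}\right).
\]
The principal character will yield the $h/\varphi(q)$ main term, and prime powers may be replaced by primes at negligible cost. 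If $(\beta_1,\chi_1)$ exists, I would next split off the real zero $\beta_1$ from the zero sum for $\chi_1$; since $\chi_1$ is real, $\overline{\chi_1}(a)=\chi_1(a)$, so this contribution combines with the $h/\varphi(q)$ term to produce exactly $\lambda h/\varphi(q)$.

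It will then remain to bound the contribution of the other zeros,
\[
\mathcal{E} := \sum_{\chi\pmod q}\;\sum_{\substack{\rho\neq(\beta_1,\chi_1) \\ |\gamma|\leq T}}\left|\int_{x-h}^x t^{\rho-1}\,dt\right| \;\ll\; h\sum_{\chi,\rho} x^{\beta-1}\min\!\left(1,\frac{x}{h(1+|\gamma|)}\right),
\]
which I would expand as a Riemann--Stieltjes integral against $\sum_{\chi\pmod q} N(\sigma,T,\chi)$. The two analytic inputs are (i) the Vinogradov--Korobov zero-free region $\beta \leq 1 - c/((\log qT)^{2/3}(\log\log qT)^{1/3})$ valid for every $\rho$ except possibly $(\beta_1,\chi_1)$, and (ii) a log-free density estimate $\sum_{\chi\pmod q} N(\sigma,T,\chi)\ll (qT)^{A(1-\sigma)}$, refined when $(\beta_1,\chi_1)$ exists by Bombieri's ``repulsive'' bound, which provides an extra saving that shrinks with $1-\beta_1$.

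Finally, I would optimize $T$ and balance the resulting exponential savings; this should produce the three contributions in the denominator of the final exponent: $\log q$ (modulus dependence), $(\log(x/h))^{2/3}(\log\log(x/h))^{1/3}$ (from the V--K region applied to the short-interval gap $x/h$), and $(\log x)^{2/5}(\log\log x)^{1/5}$ (from combining V--K with the density estimate over the long-interval part). The main obstacle will be ingredient (ii): a quantitatively sharp form of Bombieri's repulsion estimate is what makes the exponent $71/75$ attainable in the exceptional case, while in the non-exceptional case the threshold $7/12$ demands a Huxley-strength density bound. Partitioning zeros carefully by height $|\gamma|$ and by $1-\beta$ so that the optimization yields precisely the three-term denominator of the theorem will require delicate bookkeeping.
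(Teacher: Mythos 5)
Your proposal follows the same route as the paper: orthogonality of characters, the truncated explicit formula, splitting off $(\beta_1,\chi_1)$ so that it combines with the principal-character contribution to form $\lambda h/\varphi(q)$, and bounding the remaining zeros via partial summation against a log-free zero density estimate together with the Vinogradov--Korobov zero-free region, with the final $T$-optimization producing the three-term denominator. The paper factors this through an intermediate theorem (\cref{thm:FlexiblePNT}) that depends only on an input zero-free region, whereas you propose to do it in one pass, but that is a presentational difference rather than a mathematical one.

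There is one load-bearing point you gesture at but do not close. Your error sum $\mathcal{E}$ must ultimately be $O\bigl(\tfrac{\lambda h}{\varphi(q)}\exp(\cdots)\bigr)$, not merely $O\bigl(\tfrac{h}{\varphi(q)}\exp(\cdots)\bigr)$: since $\lambda$ can be as small as $\asymp (1-\beta_1)\log x$, a bound that is not proportional to $\lambda$ fails to give a relative error $o(1)$ exactly in the regime where $\beta_1$ is closest to $1$. The paper handles this by proving a zero density estimate of the form $N_q^*(\sigma,T)\ll \nu(qT)(qT)^{\frac{75}{4}(1-\sigma)}$ with the repulsion factor $\nu(u)=\min\{1,(1-\beta_1)\log u\}$ built in, and then invoking the effective two-sided bound $\tfrac{1}{8}\min\{1,(1-\beta_1)\log x\}<\lambda<2$ (\cref{lem:lambda_bound}, i.e.\ \eqref{eqn:lambda_eff}) together with the observation $\nu(qT)\ll_\epsilon \nu(q)\tfrac{\log x}{\log q}\ll \lambda\tfrac{\log x}{\log q}$ to convert the $\nu$ saving into the required $\lambda$ factor (see \eqref{eqn:lambda_application}). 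You say Bombieri's bound ``provides an extra saving that shrinks with $1-\beta_1$,'' which is exactly this $\nu(qT)$ factor, but you should state explicitly that the argument requires a quantitative comparison of that saving with $\lambda$ itself; without \eqref{eqn:lambda_eff} the plan does not close. The other thing to verify is the numerology: the threshold $\theta=\tfrac{71}{75}$ is $1-\tfrac{4}{75}$, forced by the exponent $\tfrac{75}{4}$ in the repulsive density estimate; a generic ``Bombieri-type'' bound with an unspecified exponent will not deliver this value, so you do need the sharpened estimate of \cref{thm:LFZDE} rather than merely citing the classical repulsion phenomenon.
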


\begin{remark}
\label{remark1}
Let $A\geq 1$.  If $\beta_1$ exists and $q\leq (\log x)^A$, then \cref{thm:main} holds with $\theta=\frac{7}{12}$, but at a cost: $\Cr{main}$ will depend ineffectively on $A$.  We justify this in \cref{sec:flexible_pnt} using Siegel's theorem \cite[Ch. 21]{Davenport}:  For all $\epsilon>0$, there exists a constant $b_{\epsilon} >0$ such that
\begin{equation}
\label{eqn:Siegel}
1-\beta_1 \geq b_{\epsilon} q^{-\epsilon}.
\end{equation}
The constant $b_{\epsilon}$ is effectively computable when $\epsilon\geq\frac{1}{2}$ \cite[Thm 3]{Pintz}.
\end{remark}

\begin{remark}
\label{remark2}
If there exists a constant $\Cl[abcon]{relative}>0$, independent of $q$, such that $\beta_1\leq 1-\frac{\Cr{relative}}{\log q}$, then one may take $\lambda=1$ and $\theta=\frac{7}{12}$ in \cref{thm:main}.  The implied constant and $\Cr{main}$ will depend effectively on $\Cr{relative}$. We justify these assertions in \cref{sec:flexible_pnt}.
\end{remark}

If $\beta_1$ does not exist, then $\lambda=1$ and $\frac{h}{\varphi(q)}$ is the main term in \cref{thm:main}. If $\beta_1$ exists, then the notion of a ``main term'' and an ``error term'' in \cref{thm:main} depends on how one views the role of $\beta_1$. We suggest the perspective of viewing
\begin{equation}
\label{eqn:main_plus_secondary}
\frac{\lambda h}{\varphi(q)} = \frac{h}{\varphi(q)} - \frac{\chi_1(a)}{\varphi(q)}\frac{x^{\beta_1}-(x-h)^{\beta_1}}{\beta_1}
\end{equation}
as the sum of a ``primary main term'' and a ``secondary main term'', leaving
\begin{equation}
\label{eqn:error_term}
\frac{\lambda h}{\varphi(q)}\exp\Big( \frac{-\Cr{main} \log x}{\log q + (\log\frac{x}{h})^{\frac{2}{3}}(\log^+\log\frac{x}{h})^{\frac{1}{3}}+(\log x)^{\frac{2}{5}}(\log\log x)^{\frac{1}{5}}}\Big)
\end{equation}
as an ``error term''.  A similar perspective is implicit in Linnik's work on the least prime $p\equiv a\pmod{q}$ \cite{Linnik}.  In light of our piecewise definition of $\lambda$, this perspective accommodates all ranges of $q$ and $h$ under our consideration, regardless of whether $\beta_1$ exists.

If $\beta_1$ exists, then there exists $\xi\in(x-h,x)$ such that
\begin{equation}
\label{eqn:lambda_MVT}
\lambda =1-\chi_1(a)\xi^{\beta_1-1}.
\end{equation}
Furthermore, if there exists $A\geq 1$ such that $q\leq (\log x)^A$, then \eqref{eqn:Siegel} with $\epsilon=\frac{1}{3A}$ shows that
\begin{equation}
\label{eqn:lambda_asymp}
\lambda = 1+O(\exp(-b_{\frac{1}{3A}}(\log x)^{2/3})).
\end{equation}
The $O$-term in \eqref{eqn:lambda_asymp} is readily absorbed by \eqref{eqn:error_term}.  When $q$ is larger, we allow the contribution from $\beta_1$ in \eqref{eqn:main_plus_secondary} to function as a ``secondary main term'' rather than attempt to push the contribution into any sort of ``error term''.  This permits \cref{thm:main} to effectively count primes in arithmetic progressions modulo $q$ as long as the expected number of primes congruent to $a \pmod{q}$ in the interval $(x-h,x]$, namely  $\lambda h/\varphi(q)$, exceeds $x^{\theta+\epsilon}$.  If, in addition, one has $(\log x)/(\log q)\to\infty$, then the expected number of primes becomes an effective asymptotic count, even if $\beta_1$ exists.  The point is that \eqref{eqn:main_plus_secondary} and \eqref{eqn:error_term} are each scaled by $\lambda$, so the existence of $\beta_1$ proportionally affects both quantities.  This differs from traditional presentations of the prime number theorem for arithmetic progressions (e.g., \cite{Davenport}) where the contribution from $\beta_1$ is viewed as an ``error term'' instead of a ``secondary main term.''

Even if $\beta_1$ exists, we can always effectively estimate $\lambda$ using the effective upper bound $\beta_1\leq 1-b_{1/2}q^{-1/2}$ from \eqref{eqn:Siegel} as well as the bounds
\begin{equation}
\label{eqn:lambda_eff}
\tfrac{1}{8}\min\{1,(1-\beta_1)\log x\}<\lambda<2,
\end{equation}
which we prove in \cref{sec:flexible_pnt}.  
%
Therefore, even in the most restrictive case, for all $0<\epsilon<1-\theta$, there exists an effectively computable constant $\Cl[abcon]{range_const}=\Cr{range_const}(\epsilon)>0$ such that the condition $\lambda h/\varphi(q)\geq x^{\theta+\epsilon}$ in \cref{thm:main} is satisfied when $x\geq \Cr{range_const}$ and
\begin{equation}
\label{eqn:ranges_long}
h\geq x^{1-\delta_1},\qquad q\leq x^{\delta_2},\qquad \delta_1+\frac{3}{2}\delta_2\leq 1-\theta-\epsilon.
\end{equation}
If $\beta_1$ does not exist, then the $\frac{3}{2}$ can be replaced by $1$.

Taking $h=x$ and $\epsilon = \frac{1}{555}$ in \cref{thm:main}, we arrive at the following result using \eqref{eqn:ranges_long}.


\begin{corollary}
\label{cor:PNT-LongInterval}
There exists a constant $\Cl[abcon]{cor_long_2}>0$ such that if $q\geq 2$ and $a$ are coprime integers, $x\geq q^{12}$, and $\lambda=\lambda(x,q,a,x)$ is as in \cref{thm:main}, then
\[
\sum_{\substack{p\leq x \\ p\equiv a\pmod{q}}}\log p =  \dfrac{\lambda x}{\varphi(q)}\Big[1+O \Big(\exp\Big(- \Cr{cor_long_2}\frac{\log x}{\log q}\Big)+\exp\Big(-\Cr{cor_long_2} \frac{ (\log x)^{3/5}}{(\log\log x)^{1/5}}\Big)\Big)\Big].
\]
The implied constant and $\Cr{cor_long_2}$ are absolute and effectively computable.
\end{corollary}
\begin{remark}
If $\beta_1$ exists, then $\lambda(x,q,a,x) = 1-\chi_1(a)x^{\beta_1-1}/\beta_1$.
\end{remark}

\subsection{Connections with classical results}
\label{subsec:Literature}

To display the uniformity of \cref{thm:main}, we deduce the Siegel--Walfisz theorem with the best known error term, Hoheisel's asymptotic prime number theorem for short intervals with the best known error term, a Brun--Titchmarsh type bound for short intervals, and Linnik's bound on the least prime in an arithmetic progression as corollaries of \cref{thm:main}, the ineffective bound \eqref{eqn:lambda_asymp}, and the effective bound \eqref{eqn:lambda_eff}.  While all of these results can be proved individually, the point here is that \cref{thm:main} provides a single asymptotic result that is uniform enough to deduce all of them.  These deductions, which require no information on $\beta_1$ except for \eqref{eqn:Siegel} and the effective computability of $b_{1/2}$, showcase the flexibility of \cref{thm:main} in handling any combination of large, medium, or small moduli with short, medium, or long intervals.

If $A\geq 1$, and $q\leq (\log x)^A$, then \eqref{eqn:lambda_asymp} and \cref{thm:main} with $h=x$ imply that there exists an ineffective constant $c_A>0$ such that
\[
\sum_{\substack{p\leq x \\ p\equiv a\pmod{q}}}\log p = \frac{x}{\varphi(q)}\Big(1+O\Big(\exp\Big(-c_A\frac{(\log x)^{3/5}}{(\log\log x)^{1/5}}\Big)\Big)\Big).
\]
(See also Koukoulopoulos \cite[Thm 1.1]{Koukoulopoulos}.)  If $\epsilon>0$ and one adds the condition that $h\geq x^{7/12+\epsilon}(\log x)^A$, then \eqref{eqn:lambda_asymp}, \cref{thm:main}, and \cref{remark1} recover the strongest known version of Hoheisel's short interval prime number theorem \cite{Hoheisel} for arithmetic progressions to moduli $q\leq (\log x)^A$:  There exists an ineffective constant $c_{A,\epsilon}>0$ such that
\[
\sum_{\substack{x-h<p\leq x \\ p\equiv a\pmod{q}}}\log p = \frac{h}{\varphi(q)}\Big(1+O_{\epsilon}\Big(\exp\Big(-c_{A,\epsilon}\frac{(\log x)^{1/3}}{(\log\log x)^{1/3}}\Big)\Big)\Big).
\]

Consider now the situation where $q$ is large.  Applying \cref{thm:main} with $\epsilon=\frac{3}{110}$, we find that if $\lambda h/\varphi(q) \geq x^{4/5}$ and $(\log x)/(\log q)\to\infty$, then
\[
\sum_{\substack{x-h<p\leq x \\ p\equiv a\pmod{q}}}\log p\sim \frac{\lambda h}{\varphi(q)}.
\]
If $\beta_1$ exists, then $\lambda$ satisfies \eqref{eqn:lambda_eff}.  Consequently, for all fixed $\delta>0$, there exists an effectively computable constant $\Cl[abcon]{BT1}=\Cr{BT1}(\delta)>0$ such that if $x\geq q^{\Cr{BT1}}$, then
\[
\frac{h}{\varphi(q)}(1-\beta_1)\log x\ll \sum_{\substack{x-h<p\leq x \\ p\equiv a\pmod{q}}}\log p \leq (2+\delta)\frac{h}{\varphi(q)}.
\]
When $\beta_1$ does not exist, the $(1-\beta_1)\log x$ factor is deleted, and $2+\delta$ becomes $1+\delta$.  The upper bound is a weak form of the Brun--Titchmarsh theorem for short intervals (cf. \cite[Theorem 1.2]{MV_large_sieve}).  Since $\beta_1<1$, the lower bound implies that there exists a prime $p\leq q^{\Cr{BT1}}$ such that $p\equiv a\pmod{q}$, as Linnik proved \cite{Linnik}.

\subsection*{Organization}

In \cref{sec:main_thm_proof}, we state a new version of the prime number theorem for arithmetic progressions (\cref{thm:FlexiblePNT}) that depends only on a zero-free region for $\prod_{\chi\pmod{q}}L(s,\chi)$.  The proof of \cref{thm:FlexiblePNT} uses a certain log-free zero density estimate (\cref{thm:LFZDE}) and the Vinogradov--Korobov zero-free region for Dirichlet $L$-functions.  In \cref{sec:powerful}, we use \cref{thm:FlexiblePNT} and a zero-free region for $\prod_{\chi\pmod{q}}L(s,\chi)$ due to Iwaniec to refine \cref{thm:main} when $q$ is sufficiently ``powerful'' and provide an application to primes with prescribed digits.  We also contrast our results with those of Gallagher \cite{Gallagher}.   We prove \cref{thm:FlexiblePNT} in \cref{sec:flexible_pnt}.

 \subsection*{Acknowledgements}
We thank Kevin Ford, Andrew Granville, Kannan Soundararajan, and Joni Ter{\"a}v{\"a}inen  for their encouragement and helpful conversations.  We especially thank Roger Heath-Brown for his comments regarding the proof of \cref{thm:LFZDE}.

\section{Proof of \cref{thm:main}}
\label{sec:main_thm_proof}




Let $q\geq 2$, $T \geq 1$ and $0 \leq \sigma \leq 1$.  For all Dirichlet characters $\chi\pmod{q}$, we define
\[
N_{\chi}(\sigma,T):=|\{\rho=\beta+i\gamma\colon \beta,\gamma\in\R,~\beta\geq \sigma,~|\gamma|\leq T,~L(\rho,\chi)=0\}|.
\]
If $\beta_1$ exists, then we define
\[
N_{\chi_1}^*(\sigma,T):=|\{\rho=\beta+i\gamma\colon \beta,\gamma\in\R,~\beta\geq\sigma,~|\gamma|\leq T,~\rho\neq\beta_1~L(\rho,\chi)=0\}|.
\]
Let
\[
N_q(\sigma,T):=\sum_{\chi\pmod{q}}N_{\chi}(\sigma,T).
\]
If $\beta_1$ exists, then let
\[
N_q^*(\sigma,T):=\sum_{\substack{\chi\pmod{q} \\ \chi\neq\chi_1}}N_{\chi}(\sigma,T)+N_{\chi_1}^*(\sigma,T).
\]
Along with the standard theory of Dirichlet $L$-functions \cite{Davenport}, we use a log-free zero density estimate that encodes the phenomenon that if $\beta_1$ exists, then $\beta_1$ repels all other zeros of $\prod_{\chi\pmod{q}}L(s,\chi)$.
\begin{theorem}
\label{thm:LFZDE}
Let $\epsilon > 0$. Let $T\geq 1$, $q \geq 2$, and $0\leq\sigma\leq 1$.  We have the unconditional bound
\begin{equation}
\label{eqn:huxley_jutila}
N_q(\sigma,T) \ll_{\epsilon} (qT)^{(\frac{12}{5}+\epsilon)(1-\sigma)}.
\end{equation}
If $\beta_1$ exists and $\nu(U):=\min\{1,(1-\beta_1)\log U\}$, then
\begin{equation}
\label{eqn:bombieri_mod}
N_q^*(\sigma,T) \ll_{\epsilon}  \nu(qT)(qT)^{(\frac{37}{5}+\epsilon)(1-\sigma)}.
\end{equation}
\end{theorem}
\begin{remark}
Bombieri's original proof  in \cite[Ch. 6]{Bombieri2} of the existence of an absolute and effectively computable constant $\Cl[abcon]{bomb}>0$ such that $N_q^{*}(\sigma,T)\ll \nu(qT)(qT)^{\Cr{bomb}(1-\sigma)}$ is fairly complicated.  In an earlier version of this manuscript, we had pushed Bombieri's strategy to its limit, proving that one can take $\Cr{bomb}$ to be slightly smaller than $\frac{75}{4}$.   The idea behind the simple proof below, which has the added benefit of producing stronger numerical exponents, was later communicated to us by Heath-Brown.
\end{remark}
\begin{proof}
	It is well-known that \eqref{eqn:huxley_jutila} follows from the work of Huxley \cite{Huxley} and Jutila \cite{Jutila}. To deduce \eqref{eqn:bombieri_mod}, let $0<\epsilon<1$ and let $\nu_{\epsilon}$ an effectively computable and sufficiently small positive constant  depending only on $\epsilon$. If $\nu(qT) > \nu_{\epsilon}$, then the conclusion follows from \eqref{eqn:huxley_jutila} by inflating the implied constant if necessary.  Therefore, we assume that $\nu(qT) \leq \nu_{\epsilon}$, in which case our effective lower bound for $\beta_1$ implies that $qT$ is sufficiently large (depending only on $\epsilon$). The version of the Deuring--Heilbronn phenomenon proved by Jutila \cite[Thm 2]{Jutila} implies that if $\sigma>\frac{5}{6}$ and
	\begin{equation}
		\label{eqn:DH}
		\nu(qT) < \frac{1}{8}(qT)^{-(2+\epsilon)(1-\sigma)/(6\sigma-5)}(6\sigma-5),
	\end{equation} 
	then $N^*_q(\sigma,T) = 0$. Hence, \eqref{eqn:bombieri_mod} holds under these assumptions. For the remaining cases, our starting point is a trivial consequence from \eqref{eqn:huxley_jutila}, namely 
	\[
	N_q^*(\sigma,T) \leq N_q(\sigma,T) \ll_{\epsilon} \nu(qT)^{-1}  \nu(qT) (qT)^{(\frac{12}{5}+\epsilon)(1-\sigma)}.
	\]
	Since we can rescale $\epsilon$ accordingly, it therefore suffices to show $\nu(qT) \gg q^{-(5+3\epsilon)(1-\sigma)}$ to complete the proof of \eqref{eqn:bombieri_mod}. If $\sigma \leq \frac{9}{10}$, then our effective lower bound for $\beta_1$ implies that $\nu(qT) \gg q^{-1/2} \geq q^{-5(1-\sigma)}$, as required. Otherwise, if $\sigma > \frac{9}{10}$ and \eqref{eqn:DH} does not hold, then
	\[
	\nu(qT) \gg (qT)^{-(2+\epsilon) (1-\sigma) \frac{10}{4}} \gg (qT)^{-(5+3\epsilon)(1-\sigma)}, 
	\]
	as required. This proves \eqref{eqn:bombieri_mod}.
\end{proof}

Using \cref{thm:LFZDE}, we prove a highly uniform version of the prime number theorem for arithmetic progressions that depends only on a zero-free region for Dirichlet $L$-functions.

\begin{theorem} \label{thm:FlexiblePNT}
	Let $q \geq 2$ and $a$ be coprime integers, and let $4\leq h \leq x$. Let $\lambda$ and $\theta$ be as in \cref{thm:main}.  Let $0<\epsilon<1-\theta$, and let $\delta : [1,\infty) \to [0, 1]$ be a function such that for all $T\geq e$, the product $\prod_{\chi\pmod{q}}L(s,\chi)$ does not vanish in the region
	\[
	\{s\in\mathbb{C}\colon \re(s)\geq 1-\delta(T),~|\im(s)|\leq T\}
	\]
	except possibly for $\beta_1$, if it exists. If $\lambda h/\varphi(q) \geq x^{\theta+\epsilon}$, then  
	\[
	\sum_{\substack{x-h<p\leq x \\ p\equiv a\pmod{q}}}\log p = \frac{\lambda h}{\varphi(q)}\Big\{ 1 + O_{\epsilon}\Big( \sqrt{\frac{x}{h}} \frac{\log x}{\log q} \sup\Big\{ \frac{x^{-\epsilon^2 \delta(t) }}{\sqrt{t}}\colon  \frac{ex}{h} \leq t \leq \frac{x^{(1-\theta)(1-\epsilon)}}{q} \Big\} + x^{-\epsilon\theta/2} \Big) \Big\}.
	\]
	The implied constant is effectively computable and depends at most on $\epsilon$.
\end{theorem}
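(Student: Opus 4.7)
The plan is to combine the truncated explicit formula for $\psi(x,\chi)$ with the log-free density estimate \cref{thm:LFZDE} and the given zero-free region $\delta(T)$, peeling off the pole at $s=1$ and the exceptional zero $\beta_1$ (if it exists) into the main term $\lambda h/\varphi(q)$, and then bounding the remaining zero contribution by a dyadic decomposition.

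First, by orthogonality of Dirichlet characters,
\[
\sum_{\substack{x-h<p\leq x\\ p\equiv a\pmod q}}\log p = \frac{1}{\varphi(q)}\sum_{\chi\pmod q}\bar\chi(a)\bigl(\psi(x,\chi)-\psi(x-h,\chi)\bigr)+O(\sqrt{x}\,\log x),
\]
and the standard truncated explicit formula
\[
\psi(x,\chi) = E_0(\chi)\,x-\sum_{|\gamma|\leq T}\frac{x^\rho}{\rho}+O\!\left(\frac{x(\log qxT)^2}{T}+\log x\right),
\]
with $E_0(\chi)=1$ if $\chi=\chi_0$ and $0$ otherwise, reduces matters to a sum over nontrivial zeros. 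I would pick $T$ near the top of the allowed range, $T\asymp x^{(1-\theta)(1-\epsilon)}/q$, so that the truncation error contributes at most the additive $x^{-\epsilon\theta/2}$ advertised in the theorem after dividing by the main term. The principal character yields the primary main term $h/\varphi(q)$; if $(\beta_1,\chi_1)$ exists, isolating $\rho=\beta_1$ from the zero sum for $\chi_1$ produces the secondary main term $-(\chi_1(a)/\varphi(q))(x^{\beta_1}-(x-h)^{\beta_1})/\beta_1$, which by the definition of $\lambda$ combines with $h/\varphi(q)$ to give exactly $\lambda h/\varphi(q)$.

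The remaining contribution is $\cE:=\varphi(q)^{-1}\sum_\chi\bar\chi(a)\sum_{|\gamma|\leq T,\,\rho\neq\beta_1}(x^\rho-(x-h)^\rho)/\rho$. Taking the geometric mean of the two elementary bounds $|x^\rho-(x-h)^\rho|\leq 2x^\beta$ and $|x^\rho-(x-h)^\rho|\leq h|\rho|x^{\beta-1}$ yields the pointwise estimate
\[
\frac{|x^\rho-(x-h)^\rho|}{|\rho|}\;\ll\;\sqrt{\frac{h}{|\gamma|}}\,x^{\beta-1/2},
\]
which is precisely what produces the $\sqrt{x/h}\cdot t^{-1/2}$ structure once normalized by the main term $\lambda h/\varphi(q)$. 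I would then decompose the zero sum dyadically into shells $|\gamma|\in(t,2t]$, and on each shell apply Abel summation in $\sigma$ to the density bound $N_q^*(\sigma,2t)\ll\nu(qt)(qt)^{\frac{75}{4}(1-\sigma)}$, together with the fact that no zero other than $\beta_1$ has real part exceeding $1-\delta(2t)$. Up to the factor $\nu(qt)/\lambda$ and logarithmic losses, this yields a shell contribution to $\cE/(\lambda h/\varphi(q))$ of the form $\sqrt{x/h}\cdot t^{-1/2}\cdot x^{-\delta(2t)}(qt)^{\frac{75}{4}\delta(2t)}$. The restriction $t\leq x^{(1-\theta)(1-\epsilon)}/q$ then gives $(qt)^{75/4}\leq x^{1-\epsilon}$, so that the savings factor $x^{-\delta(2t)}(qt)^{\frac{75}{4}\delta(2t)}$ is at most $x^{-\epsilon\delta(2t)}$; using $\lambda h/\varphi(q)\geq x^{\theta+\epsilon}$ a second time in the normalization absorbs a further $x^{-\epsilon}$ to produce the exponent $-\epsilon^2\delta(t)$ displayed in the theorem. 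The lower cutoff $t\geq ex/h$ is exactly the threshold below which $h|\rho|x^{\beta-1}$ would have been the smaller of the two basic bounds, and the dyadic sum converges geometrically via $t^{-1/2}$ so that only the supremum survives.

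The main technical obstacle will be matching the factor $\nu(qt)$ against $\lambda$ via the lower bound $\lambda\gg\min\{1,(1-\beta_1)\log x\}$ from \eqref{eqn:lambda_eff}: since $\nu(qt)=\min\{1,(1-\beta_1)\log(qt)\}$, a case analysis shows $\nu(qt)/\lambda\ll 1$ uniformly for $qt\leq x$, and the remaining logarithmic bookkeeping (from integration by parts, the principal-character prime-power sum $\sum_{p\mid q, p^k\leq x}\log p$, and the truncation error) is what ultimately produces the $(\log x)/(\log q)$ factor in the stated error. When $(\beta_1,\chi_1)$ does not exist one instead invokes the unconditional bound $N_q(\sigma,T)\ll(qT)^{(12/5+\epsilon)(1-\sigma)}$ with $\nu\equiv 1$ and $\lambda=1$; replacing $75/4$ by $12/5$ throughout forces the upper cutoff on $t$ to correspond to $\theta=7/12$ instead of $71/75$, which is the only change in the argument.
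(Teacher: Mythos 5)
Your overall strategy — explicit formula, pull out $\chi_0$ and $\beta_1$ into $\lambda h/\varphi(q)$, bound the remaining zero sum via dyadic shells and \cref{thm:LFZDE} plus the zero-free region — is the same as the paper's, and the geometric-mean bound $|x^\rho-(x-h)^\rho|/|\rho|\ll\sqrt{h/|\gamma|}\,x^{\beta-1/2}$ is a clean unification of the paper's two-case estimate that gives the same $\sqrt{x/h}\,t^{-1/2}$ structure after normalizing. The $\nu(qt)/\lambda$ bookkeeping via \cref{lem:lambda_bound} is also right.

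There is, however, a genuine gap in your choice of truncation height. You take $T\asymp x^{(1-\theta)(1-\epsilon)}/q$ and claim the truncation error falls below $\frac{\lambda h}{\varphi(q)}x^{-\epsilon\theta/2}$. But the explicit-formula remainder is $\asymp x(\log x)^2/T$ per character (it does not shrink under averaging over $\chi$), so with your $T$ it is $\asymp qx^{\theta+\epsilon-\epsilon\theta}(\log x)^2$. In the boundary regime $h\asymp x$, $\varphi(q)\asymp x^{1-\theta-\epsilon}$, $\lambda h/\varphi(q)\asymp x^{\theta+\epsilon}$, your truncation error is $\asymp x^{1-\epsilon\theta}(\log x)^2$ against a target of $x^{\theta+\epsilon-\epsilon\theta/2}$, and the former dominates whenever $1-\theta>\epsilon(1+\theta/2)$, i.e.\ for all $0<\epsilon<\tfrac{2(1-\theta)}{2+\theta}$ (with $\theta=71/75$, for $\epsilon<8/221\approx 0.036$), which is inside the allowed range $0<\epsilon<1-\theta\approx 0.053$. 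The paper avoids this by taking the larger $T=x^{(1-\theta)(1-\epsilon)}$ (no $1/q$), so the truncation error is only $x^{\theta+\epsilon-\epsilon\theta}(\log x)^2$, and then bounding the contribution of zeros with $|\gamma|\in(x^{(1-\theta)(1-\epsilon)}/q,\,T]$ — the range for which $(qt)^{75/4}/x$ may exceed $1$ and the zero-free region gives no leverage — purely via the density estimate, showing it is $\ll qx^{\theta+\epsilon-\epsilon\theta}\log x$ and hence absorbable into $x^{-\epsilon\theta/2}$. You need this extra step; simply lowering $T$ does not work.

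Separately, your account of the $\epsilon^2$ in the exponent is off. You correctly get $(qt)^{75/4}\leq x^{1-\epsilon}$ on the sup range, hence a savings factor $x^{-\epsilon\delta(\cdot)}$, but ``using $\lambda h/\varphi(q)\geq x^{\theta+\epsilon}$ a second time'' does not convert $-\epsilon\delta(2t)$ into $-\epsilon^2\delta(t)$. The extra $\epsilon$ in the paper arises from writing the density exponent as $\tfrac{1+\epsilon}{1-\theta}$ (absorbing the implied constant of \cref{thm:LFZDE}), so that $(qT)^{(1+\epsilon)/(1-\theta)}\leq x^{(1-\epsilon)(1+\epsilon)}=x^{1-\epsilon^2}$. (One can also pass from $x^{-\epsilon\delta}$ to $x^{-\epsilon^2\delta}$ trivially since $\epsilon<1$, but then one must reconcile $\delta(2t)$ with $\delta(t)$ at the sup, which requires a small endpoint adjustment you have not spelled out.) Neither of these is your stated mechanism.
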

\begin{remark}
If $\beta_1$ exists (resp. does not exist) and the exponent $\frac{22}{5}$ (resp. $\frac{12}{5}$) in \cref{thm:LFZDE} is improved to some constant $\Cl[abcon]{improved_ZDE}>0$, then \cref{thm:FlexiblePNT} holds with $\theta = 1-\frac{1}{\Cr{improved_ZDE}}$.
\end{remark}

\begin{proof}[Proof of \cref{thm:main}]
We use the Vinogradov--Korobov zero-free region for Dirichlet $L$-functions \cite[Ch. 9]{Montgomery_ten} in \cref{thm:FlexiblePNT}.  In particular, there exists an absolute and effectively computable constant $\Cl[abcon]{ZFR-VK} > 0$ such that for $t \geq e$,
	\begin{equation}
	\label{eqn:ZFR}
	\delta(t) \geq \Cr{ZFR-VK}/(\log q + (\log t)^{2/3}(\log^+\log t)^{1/3}).
	\end{equation}
	Since $e^{-\frac{1}{u+v}} \leq e^{-\frac{1}{2\max\{u,v\}}} \leq e^{-\frac{1}{2u}} + e^{-\frac{1}{2v}}$ for $u,v > 0$, it follows that
	\[
	\dfrac{x^{-\epsilon^2 \delta(t)}}{\sqrt{t}} \leq \exp\Big(-\frac{\epsilon^2 \Cr{ZFR-VK} \log x}{2\log q} - \frac{1}{2}\log t\Big) +  \exp\Big(-\frac{\epsilon^2 \Cr{ZFR-VK} \log x}{2 (\log t)^{2/3} (\log^+\log t)^{1/3}} - \frac{1}{2}\log t\Big)
	\]
	for $t \geq e$. We maximize these two exponentials over $t \in [\frac{ex}{h}, \infty)$. The first exponential is maximized at the leftmost endpoint $t=ex/h$.  For the second exponential, notice $\frac{1}{2} \log t$ and $\frac{\epsilon^2 \Cr{ZFR-VK} \log x}{2 (\log t)^{2/3} (\log^+\log t)^{1/3}}$ are the same order of magnitude when $\log t \asymp (\log x)^{3/5} (\log\log x)^{-1/5}$. Thus, the second exponential is maximized on the interval $[\frac{ex}{h}, \infty)$ for a value of $t=t_0$ such that either $t_0=ex/h$, or $\log t_0 \asymp_{\epsilon} (\log x)^{3/5} (\log\log x)^{-1/5}$ and $t_0 \geq ex/h$. This implies that
	\begin{align*}
		\sup_{t \geq ex/h} \Big\{ \dfrac{x^{-\epsilon^2 \delta(t)}}{\sqrt{t}} \Big\} 
		& \leq \sqrt{\frac{h}{x}}  \exp\Big(-\frac{\epsilon^2 \Cr{ZFR-VK} \log x}{2\log q} \Big) + \sqrt{\frac{h}{x}}  \exp\Big(-\frac{\epsilon^2 \Cr{ZFR-VK} \log x}{2 (\log t_0)^{2/3} (\log^+\log t_0)^{1/3}} \Big) \\
		& \leq 2 \sqrt{\frac{h}{x}}  \exp\Big(-\frac{\epsilon^2 \Cr{ZFR-VK} \log x}{2\log q + 2(\log t_0)^{2/3} (\log^+\log t_0)^{1/3}} \Big), 
	\end{align*}
	since $e^{-\frac{1}{u}} + e^{-\frac{1}{v}} \leq 2e^{-\frac{1}{u+v}}$ for $u,v > 0$. 
	Therefore, by the conditions on $t_0$, there exists an effectively computable constant $\Cr{main} = \Cr{main}({\epsilon})>0$, depending at most on $\epsilon$, such that
	\begin{equation}
	\label{eqn:sup}
	\begin{aligned}
	&\sqrt{\frac{x}{h}} \frac{\log x}{\log q} \sup\Big\{ \frac{x^{-\epsilon^2 \delta(t)}}{\sqrt{t}}\colon  \frac{ex}{h} \leq t \leq \frac{x^{(1-\theta)(1-\epsilon)}}{q} \Big\}+x^{-\epsilon\theta/2}\\   
	&\leq  \frac{2\log x}{\log q}   \exp\Big(-\frac{\epsilon^2 \Cr{ZFR-VK} \log x}{2\log q + 2(\log t_0)^{2/3} (\log^+\log t_0)^{1/3}} \Big) +x^{-\epsilon\theta/2}\\
	&\ll_{\epsilon}  \exp\Big(-\frac{\Cr{main} \log x}{\log q+(\log\frac{x}{h})^{2/3}(\log^+\log\frac{x}{h})^{1/3}+(\log x)^{2/5}(\log\log x)^{1/5}}\Big). 
	\end{aligned}
	\end{equation}
	Inserting this estimate in the conclusion of \cref{thm:FlexiblePNT} proves \cref{thm:main}. 
\end{proof}

The quality of the zero-free region does not impose constraints on the range of $q$ or the length of the interval $(x-h,x]$. These ranges are determined only by the log-free zero density estimate in \cref{thm:LFZDE}. The zero-free region only influences quality of the $O$-term.

\section{Further refinements}
\label{sec:powerful}

Zero-free regions other than \eqref{eqn:ZFR} can be used with \cref{thm:FlexiblePNT}.  In some situations, this substitution can lead to improvements over \cref{thm:main}.  We consider two such examples.

\subsection{Powerful moduli}

Let $q\geq 2$, and let $d=d(q) = \prod_{p|q}p$ be the squarefree part of $q$.  It follows from work of Iwaniec \cite{Iwaniec} that in \cref{thm:FlexiblePNT}, we may take
\begin{equation}
\label{eqn:iwaniec}
\delta(t)\geq 1/(4\cdot10^4(\log d + (\log qt)^{3/4}(\log^+\log(qt))^{3/4})).
\end{equation}
Inserting \eqref{eqn:iwaniec} into \cref{thm:FlexiblePNT}, we immediately obtain another highly uniform version of the prime number theorem for arithmetic progressions.

\begin{corollary}
\label{cor:PNT-Powerful}
	Let $q \geq 2$ and $a$ be coprime integers and let $3 \leq h < x$. Let $d = \prod_{p \mid q} p$ be the squarefree part of $q$, let $\lambda$ and $\theta$ be as in \cref{thm:main}.  For all $\epsilon>0$, there exists a constant $\Cl[abcon]{powerful}=\Cr{powerful}(\epsilon) > 0$  such that if $\lambda h/\varphi(q) \geq x^{\theta+\epsilon}$ then
\[
\sum_{\substack{x-h<p\leq x \\ p\equiv a\pmod{q}}}\log p =  \frac{\lambda h}{\varphi(q)}  \Big[1+O_{\epsilon}\Big( \exp\Big( \frac{- \Cr{powerful} \log x}{\log d + (\log\frac{qx}{h})^{\frac{3}{4}}(\log^+\log\frac{qx}{h})^{\frac{3}{4}}+(\log x)^{\frac{3}{7}}(\log \log x)^{\frac{3}{7}} } \Big) \Big)\Big].
\]
The implied constant and $\Cr{powerful}$ are effectively computable.
\end{corollary}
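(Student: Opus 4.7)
The plan is to mimic the proof of \cref{thm:main} nearly verbatim, substituting Iwaniec's zero-free region \eqref{eqn:iwaniec} for the Vinogradov--Korobov bound \eqref{eqn:ZFR}. Applying \cref{thm:FlexiblePNT} with $\delta(t)$ furnished by \eqref{eqn:iwaniec} reduces matters to estimating
\[
\sqrt{\frac{x}{h}}\,\frac{\log x}{\log q}\sup\Big\{\frac{x^{-\epsilon^{2}\delta(t)}}{\sqrt{t}}\colon \frac{ex}{h}\leq t\leq \frac{x^{(1-\theta)(1-\epsilon)}}{q}\Big\}+x^{-\epsilon\theta/2}.
\]
As in the derivation of \eqref{eqn:sup}, I would apply the elementary inequality $e^{-1/(u+v)}\leq e^{-1/(2u)}+e^{-1/(2v)}$ to split $x^{-\epsilon^{2}\delta(t)}/\sqrt{t}$ into two exponentials whose exponents respectively combine $-\tfrac{1}{2}\log t$ with $-\frac{\epsilon^{2}\log x}{8\cdot 10^{4}\log d}$ and with $-\frac{\epsilon^{2}\log x}{8\cdot 10^{4}(\log qt)^{3/4}(\log^{+}\log qt)^{3/4}}$.

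The first exponential is decreasing in $t$, so it is maximized at $t=ex/h$; evaluated there, the prefactor $\sqrt{x/h}$ cancels the $\sqrt{h/x}$ coming from $1/\sqrt{t}|_{t=ex/h}$, producing a contribution of shape $\exp(-c_{\epsilon}\log x/\log d)$. The analogous endpoint evaluation for the second exponential yields the $(\log\frac{qx}{h})^{3/4}(\log^{+}\log\frac{qx}{h})^{3/4}$ summand in its denominator. For the interior maximum of the second exponential, the admissible range $q\leq x^{1-\theta}$ forces $\log qt\asymp\log t$, so balancing $\tfrac12\log t$ against $\frac{\epsilon^{2}\log x}{8\cdot 10^{4}(\log qt)^{3/4}(\log^{+}\log qt)^{3/4}}$ yields a balance point $t_{0}$ with $\log t_{0}\asymp_{\epsilon}(\log x)^{4/7}(\log\log x)^{-3/7}$. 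The balanced exponent is then of order $(\log x)^{4/7}(\log\log x)^{-3/7}=\log x/[(\log x)^{3/7}(\log\log x)^{3/7}]$, matching the final summand in the denominator of the statement. If $ex/h>t_{0}$, the interior balance is never reached and the supremum is attained at the left endpoint, in which case the $(\log\frac{qx}{h})^{3/4}(\log^{+}\log\frac{qx}{h})^{3/4}$ piece already dominates $(\log x)^{3/7}(\log\log x)^{3/7}$, so no new estimate is needed.

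Finally, I would apply $e^{-1/u}+e^{-1/v}\leq 2e^{-1/(u+v)}$ to merge the two optimized exponentials into a single exponential with denominator
\[
\log d+\Big(\log\frac{qx}{h}\Big)^{3/4}\Big(\log^{+}\log\frac{qx}{h}\Big)^{3/4}+(\log x)^{3/7}(\log\log x)^{3/7},
\]
absorbing the prefactor $(\log x)/\log q$ and the residual $x^{-\epsilon\theta/2}$ from \cref{thm:FlexiblePNT} into the constant $\Cr{powerful}=\Cr{powerful}(\epsilon)$ for any sufficiently small $\epsilon$. The argument is a mechanical translation of \eqref{eqn:sup} with new exponents, so I do not expect a genuine obstacle; the only bookkeeping concern is verifying that $t_{0}$ lies in $[\frac{ex}{h},\frac{x^{(1-\theta)(1-\epsilon)}}{q}]$, which is handled by the two endpoint cases above.
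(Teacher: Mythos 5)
Your proof is correct and takes essentially the same approach as the paper, which dispatches this corollary in one sentence by inserting Iwaniec's zero-free region \eqref{eqn:iwaniec} into \cref{thm:FlexiblePNT} and redoing the optimization from \eqref{eqn:sup}; your balance-point computation giving $\log t_0 \asymp_\epsilon (\log x)^{4/7}(\log\log x)^{-3/7}$ and hence the exponent denominator $(\log x)^{3/7}(\log\log x)^{3/7}$ is exactly the intended calculation.
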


\cref{cor:PNT-Powerful} is a substantial improvement over \cref{thm:main} when $q$ is sufficiently large relative to $d$, in which case $q$ is ``powerful.''  If $\beta_1$ exists, then by \cite[Lem 6.2]{Swaenepoel}, there exists an absolute and effectively computable constant $\Cl[abcon]{Swaenepoel}>0$ such that $\beta_1<1-\Cr{Swaenepoel}/(\sqrt{d}(\log d)^2)$.  Thus, if $q>\exp(\sqrt{d}(\log d)^2/(50\Cr{Swaenepoel}))$, then $\beta_1$ does not exist, $\lambda=1$, and if $h/\varphi(q)\geq x^{7/12+\epsilon}$, then there exists an effectively computable congmstant $\Cl[abcon]{powerful2}=\Cr{powerful2}(d)>0$ such that
\begin{equation}
\label{eqn:powerful_pnt}
\sum_{\substack{x-h<p\leq x \\ p\equiv a\pmod{q}}}\log p =  \frac{h}{\varphi(q)}  \Big[1+O_{d,\epsilon}\Big( \exp\Big( \frac{- \Cr{powerful2} \log x}{(\log\frac{qx}{h})^{\frac{3}{4}}(\log^+\log\frac{qx}{h})^{\frac{3}{4}}+(\log x)^{\frac{3}{7}}(\log \log x)^{\frac{3}{7}} } \Big) \Big)\Big].\hspace{-.077in}
\end{equation}
This can be used to study primes with prescribed digits. Namely, one can asymptotically count primes while simultaneously specifying 41.6\% of the first and last digits.

\begin{corollary}
\label{cor:digits}
	Let $A,B,N \geq 1$ and $\ell \geq 2$ be integers with $A+B< N$.  Let $\mathcal{P}_{\ell}(N)$ be the set of primes with $N$ digits in their base $\ell$ expansions.  For a prime $p\in\mathcal{P}_{\ell}(N)$, write its base $\ell$ expansion as $\sum_{j=0}^{N-1} d_j(p)\ell^{j}$.  Fix $d_0,d_1,\ldots,d_{A-1},d_{N-B},d_{N-B+1},\ldots,d_{N-1}\in\{0,1,\ldots,\ell-1\}$, with $d_{N-1}\neq 0$ and $\gcd(d_1,\ell)=1$.  For all $0<\epsilon<\frac{5}{12}$, there exists a constant $\Cl[abcon]{digits}=\Cr{digits}(\epsilon,\ell)>0$ such that if $A + B \leq ( \frac{5}{12} - \epsilon) N$, then
	\begin{multline*}
\#\{p\in\mathcal{P}_{\ell}(N)\colon \textup{$d_j(p)=d_j$ for $0\leq j\leq A-1$ and $N-B+1\leq j\leq N-1$}\}\\
=\frac{\ell^{N-A-B}}{\varphi(\ell)} \Big( 1 + O_{\epsilon,\ell}\Big( \exp\Big( - \frac{\Cr{digits} N^{1/4} }{ (\log N)^{3/4} } \Big) \Big) \Big) \Big).
	\end{multline*}
	The implied constant and $\Cr{digits}$ are effectively computable.
\end{corollary}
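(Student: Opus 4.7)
The strategy is to translate the digit prescription into a congruence-plus-interval condition on $p$ and invoke the powerful-modulus PNT \eqref{eqn:powerful_pnt}. Fixing the low-order digits $d_0,\ldots,d_{A-1}$ amounts to a residue class $a\pmod{\ell^A}$, coprime to $\ell^A$ by the coprimality condition on the last digit; fixing the high-order digits $d_{N-B},\ldots,d_{N-1}$ (with $d_{N-1}\neq 0$) confines $p$ to a half-open interval of length $h:=\ell^{N-B}$ with right endpoint $x\leq\ell^N$. The desired count is then obtained from the short-interval weighted sum $\sum_{x-h<p\leq x,\,p\equiv a\pmod{\ell^A}}\log p$ by partial summation, which changes the main term by a factor $\log x\asymp_{\ell}N$ absorbed into the implicit constant.

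Take $q=\ell^A$, so that its squarefree kernel is $d=\mathrm{rad}(\ell)\ll_{\ell}1$. The hypothesis $A+B\leq(\tfrac{5}{12}-\epsilon)N$ gives $h/\varphi(q)\gg_{\ell}\ell^{N-A-B}\geq x^{7/12+\epsilon/2}$ for $N$ larger than some $N_0(\epsilon,\ell)$. To apply \eqref{eqn:powerful_pnt} we need $q=\ell^A$ to exceed the Swaenepoel threshold $\exp(\sqrt{d}(\log d)^2/(50\Cr{Swaenepoel}))$, which depends only on $\ell$ and is exceeded for $A\geq A_0(\ell)$. For the finitely many residual cases $1\leq A<A_0(\ell)$, the modulus $q=\ell^A$ is bounded in terms of $\ell$; Pintz's effective Siegel-type bound $1-\beta_1\geq b_{1/2}q^{-1/2}\gg_{\ell}1$ lets us apply \cref{thm:main} with $\lambda=1$ and $\theta=7/12$ effectively, yielding an error of strictly better quality $\exp(-c_\ell N^{1/3}/(\log N)^{1/3})$ that is absorbed by the target.

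In the main case $A\geq A_0(\ell)$, it remains only to simplify the exponential in \eqref{eqn:powerful_pnt}. Observe that $\log x=N\log\ell+O(1)$ and $qx/h\leq\ell^{A+B}$, so $\log(qx/h)\leq(A+B)\log\ell\ll_{\ell}N$. The denominator inside the exponential is dominated by the middle piece $(\log(qx/h))^{3/4}(\log^+\log(qx/h))^{3/4}\ll_{\ell}N^{3/4}(\log N)^{3/4}$, the remaining $(\log x)^{3/7}(\log\log x)^{3/7}\ll_\ell N^{3/7}(\log N)^{3/7}$ being of strictly smaller order. Dividing the numerator $\Cr{powerful2}\log x\asymp_{\ell}N$ by this denominator yields the stated bound $\exp(-\Cr{digits}N^{1/4}/(\log N)^{3/4})$. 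The principal obstacle in a full write-up is bookkeeping rather than analysis: identifying the interval endpoints from the prescribed high-order digits (including the $d_{N-1}\neq 0$ condition ensuring exactly $N$ digits), separating off the small-$A$ cases and verifying they are dominated by the main error, and tracking all $\ell$-dependence through $\varphi(\ell^A)$, the Swaenepoel threshold, and Pintz's effective constants.
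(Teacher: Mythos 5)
Your route matches the paper's intended proof exactly: the paper says only that ``the proof is nearly immediate from \eqref{eqn:powerful_pnt},'' translating the prescribed low-order digits into a residue class $a\pmod{\ell^A}$ and the prescribed high-order digits into a short interval, noting that $d=\mathrm{rad}(\ell)\ll_\ell 1$, and simplifying the exponential. Your bookkeeping of the two regimes in $A$ (large $A$ via the Swaenepoel threshold, bounded $A$ via Pintz's effective $b_{1/2}$ and the mechanism of Remarks~\ref{remark1}--\ref{remark2}) is the right way to make the constant effective, and your identification of the dominant denominator piece $N^{3/4}(\log N)^{3/4}$ is correct.

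The one step that is genuinely imprecise is the passage from $\sum_{x-h<p\le x,\,p\equiv a\pmod{\ell^A}}\log p$ to the \emph{count} of primes. You write that this ``changes the main term by a factor $\log x\asymp_\ell N$ absorbed into the implicit constant,'' but that factor multiplies the main term, not the error: $\frac{h}{\varphi(q)}$ becomes, after partial summation, roughly $\int_{x-h}^{x}\frac{dt}{\varphi(q)\log t}\asymp\frac{h}{\varphi(q)\log x}$, and the extra $1/\log x\asymp 1/(N\log\ell)$ cannot be swept into an $O_{\epsilon,\ell}(\exp(-\Cr{digits}N^{1/4}/(\log N)^{3/4}))$ error. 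You should carry the $1/\log x$ (or the logarithmic integral) explicitly in the main term; note also that the relative error incurred in doing the partial summation must itself be controlled by applying \eqref{eqn:powerful_pnt} at intermediate endpoints $t\in(x-h,x]$, which is legitimate provided $t-(x-h)$ is not too small (the remaining sliver is bounded trivially). With the main term written with the correct density factor, the rest of your estimate for the exponential goes through.
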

The proof is nearly immediate from \eqref{eqn:powerful_pnt}.  The arithmetic progression condition prescribes the $A$ least significant digits, while the short interval condition prescribes the $B$ most significant digits.  \cref{cor:digits} complements work of Swaenepoel \cite{Swaenepoel} wherein a small proportion of the digits can be prescribed without any restriction on how significant the digits are.

\subsection{Comparison with work of Gallagher}

As far as the authors know, the result closest to \cref{thm:main} in the existing literature follows from the seminal work of Gallagher \cite[Thm 7]{Gallagher}, which we now describe using our notation. 
  Recall the definition of $\lambda$ in \cref{thm:main}.  With the effective bounds in \eqref{eqn:lambda_eff}, Gallagher's work implies that there exist absolute and effectively computable constants $\Cl[abcon]{Gal1},\Cl[abcon]{Gal2}>0$ such that if
\begin{equation}
\label{eqn:Gal_range}
x/q\leq h\leq x,\qquad \exp(\sqrt{\log x})\leq q\leq x^{\Cr{Gal1}},
\end{equation}
then
\begin{equation}
\label{eqn:Gal_bound}
\Big|1-\frac{\varphi(q)}{\lambda h}\sum_{\substack{x-h<p\leq x \\ p\equiv a \pmod{q}}}\log p\Big|  \ll\begin{cases}
\displaystyle (1-\beta_1)(\log q)\exp\Big(-\Cr{Gal2}\frac{\log x}{\log q}\Big)&\mbox{if $\beta_1$ exists,}\\
\displaystyle \exp\Big(-\Cr{Gal2}\frac{\log x}{\log q}\Big)&\mbox{otherwise.} 
 \end{cases}
\end{equation}
The proof of \eqref{eqn:Gal_bound} uses a log-free bound on $N_q(\sigma,T)$ as in \cref{thm:LFZDE}, the ``standard'' zero-free region $\beta_1\neq \beta\leq 1-\frac{\Cl[abcon]{standard_ZFR}}{\log(q(|\gamma|+3))}$ and, when $\beta_1$ exists, the zero-free region \eqref{eqn:DH} derived from the zero repulsion phenomenon of Deuring and Heilbronn \cite[(27)]{Gallagher}.  This contrasts with our use of $N_q^*(\sigma,T)$ when $\beta_1$ exists and the Vinogradov--Korobov zero-free region \eqref{eqn:ZFR}.

When $\beta_1$ does not exist and $q\geq \exp(\sqrt{\log x})$, \cref{thm:main} achieves the asymptotic \eqref{eqn:Gal_bound} in a stronger range than \eqref{eqn:Gal_range}.  When $\beta_1$ exists, an exact comparison of \eqref{eqn:Gal_bound} with \cref{thm:main} depends on how large $\beta_1$ is as an explicit function of $q$.  We observe here that we can estimate \eqref{eqn:sup} using both the Vinogradov--Korobov zero-free region and \eqref{eqn:DH}.  This observation leads to a mutual refinement of \cref{thm:main} and \eqref{eqn:Gal_bound} in a range that improves upon \eqref{eqn:Gal_range}.
\begin{corollary}
\label{cor:Gal}
	Let $q\geq 2$ and $a$ be coprime integers and $4\leq h \leq x$.  If $\beta_1$ exists and $\lambda$ is as in \cref{thm:main}, then there exist $\Cl[abcon]{Gal_cor},\Cl[abcon]{Gal_cor2}\in(0,1)$ such that if $\lambda h/\varphi(q) \geq x^{\Cr{Gal_cor}}$, then 
	{\small\[
	\Big|1-\frac{\varphi(q)}{\lambda h}\sum_{\substack{x-h<p\leq x \\ p\equiv a\pmod{q}}}\log p\Big|\ll (1-\beta_1)(\log q)\exp\Big(\frac{-\Cr{Gal_cor2}\log x}{\log q + (\log\frac{x}{h})^{\frac{2}{3}}(\log^+\log\frac{x}{h})^{\frac{1}{3}}+(\log x)^{\frac{2}{5}}(\log\log x)^{\frac{1}{5}}}\Big).
	\]}%
The implied constant, $\Cr{Gal_cor}$, and $\Cr{Gal_cor2}$ are absolute and effectively computable.
\end{corollary}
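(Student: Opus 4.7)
My plan is to invoke \cref{thm:FlexiblePNT} with a zero-free region function $\delta(t)$ taken to be the pointwise maximum of the Vinogradov--Korobov bound \eqref{eqn:ZFR} and the Deuring--Heilbronn bound \eqref{eqn:DH}, and then to re-estimate the supremum from \cref{thm:FlexiblePNT}, as in \eqref{eqn:sup}, using both bounds at once rather than \eqref{eqn:ZFR} alone. The Vinogradov--Korobov contribution will produce the exponential decay with the same denominator $\log q + (\log\tfrac{x}{h})^{2/3}(\log^+\log\tfrac{x}{h})^{1/3} + (\log x)^{2/5}(\log\log x)^{1/5}$ appearing in \cref{thm:main}, while the Deuring--Heilbronn contribution supplies the new prefactor $(1-\beta_1)\log q$.

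Concretely, using $\max(a,b) \geq (a+b)/2$ for $a,b \geq 0$ yields the factorization
\[
x^{-\epsilon^2 \delta(t)} \leq x^{-\epsilon^2 \delta_{\textup{VK}}(t)/2} \cdot x^{-\epsilon^2 \delta_{\textup{DH}}(t)/2},
\]
where $\delta_{\textup{VK}}$ and $\delta_{\textup{DH}}$ denote the right-hand sides of \eqref{eqn:ZFR} and \eqref{eqn:DH}. Substituting \eqref{eqn:DH} into the second factor and using $x^{-\alpha} = \exp(-\alpha\log x)$ gives
\[
x^{-\epsilon^2 \delta_{\textup{DH}}(t)/2} \leq \bigl((1-\beta_1)\log(q(t+3))\bigr)^{\epsilon^2 \Cr{DH}\log x /(2\log(q(t+3)))},
\]
which is meaningful whenever $(1-\beta_1)\log(q(t+3))<1$, a condition guaranteed by McCurley's bound $(1-\beta_1)\log q \leq 1/50$ whenever $t \leq q^{49}$. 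In the supremum range $t \leq x^{(1-\theta)(1-\epsilon)}/q$ with $\theta=71/75$, we have $\log(q(t+3)) \leq (1-\theta)(1-\epsilon)\log x + O(1)$, so the exponent on $(1-\beta_1)\log(q(t+3))$ is bounded below by $\epsilon^2 \Cr{DH}/(2(1-\theta)(1-\epsilon))$. Choosing $\Cr{Gal_cor}$ close enough to $1$ so that $\epsilon := \Cr{Gal_cor} - \theta$ is close enough to $1-\theta$ forces this exponent to exceed $1$; combined with $(1-\beta_1)\log q \leq 1/50$ and the elementary inequality $v^\alpha \leq v$ for $v \in (0,1]$ and $\alpha \geq 1$, this yields $x^{-\epsilon^2 \delta_{\textup{DH}}(t)/2} \ll (1-\beta_1)\log q$, up to a harmless factor $1+\log t/\log q$ absorbed into the Vinogradov--Korobov exponential by a mild weakening of its constant. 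The alternative case $t > q^{49}$ can only occur when $q$ is very small relative to $x$, and there the decay $1/\sqrt{t} < q^{-49/2}$ is already much smaller than the target $(1-\beta_1)\log q \cdot \exp(-\Cr{Gal_cor2}\log x/D)$, so that subrange is absorbed trivially.

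The main obstacle is the calibration of $\Cr{Gal_cor}$ and $\epsilon$ so that the Deuring--Heilbronn exponent strictly exceeds $1$ uniformly throughout the DH-effective subrange; since $\epsilon$ is bounded above by $1-\theta = 4/75$, the required closeness of $\Cr{Gal_cor}$ to $1$ depends in a quantitative way on the absolute constant $\Cr{DH}$ from \eqref{eqn:DH}. Once the calibration is done, inserting the factored bound into the error term of \cref{thm:FlexiblePNT}, absorbing the trivial summand $x^{-\epsilon\theta/2}$, and carrying out the same trade-off calculation between the constituents of the Vinogradov--Korobov denominator as in \eqref{eqn:sup} produces the estimate of the corollary.
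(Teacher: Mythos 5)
There is a genuine gap in the calibration step. You want the exponent $\alpha_t := \epsilon^2\Cr{DH}\log x/(2\log(q(t+3)))$ on $(1-\beta_1)\log(q(t+3))$ to exceed $1$ uniformly over the supremum range, and you bound it from below by evaluating $\log(q(t+3))$ at the upper endpoint $t = x^{(1-\theta)(1-\epsilon)}/q$, giving $\alpha_t \geq \epsilon^2\Cr{DH}/(2(1-\theta)(1-\epsilon))$. But \cref{thm:FlexiblePNT} forces $\epsilon < 1-\theta = 4/75$, so this lower bound is at most $\frac{(1-\theta)\Cr{DH}}{2\theta} = \frac{2\Cr{DH}}{71}$, and no choice of $\Cr{Gal_cor}$ (equivalently $\epsilon$) can push it past $1$ unless $\Cr{DH} > 71/2 = 35.5$. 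Since $\Cr{DH}$ is whatever absolute constant comes out of the Deuring--Heilbronn zero-free region \eqref{eqn:DH}, you cannot assume it is this large, and the inequality $v^\alpha \leq v$ you rely on fails when $\alpha < 1$.

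The paper sidesteps this in two ways that your factorization $x^{-\epsilon^2\delta(t)} \leq x^{-\epsilon^2\delta_{\mathrm{VK}}(t)/2}x^{-\epsilon^2\delta_{\mathrm{DH}}(t)/2}$ does not capture. First, it optimizes the DH-only quantity $\sqrt{q}\exp(-\epsilon^2\Cr{DH}L\log x/\log(qt) - \tfrac12\log(qt))$ over $t$ rather than plugging in the worst endpoint; the optimal $t_1$ satisfies $\log(qt_1) \asymp \log\tfrac{qx}{h} + (L\log x)^{1/2}$, which is much smaller than $(1-\theta)(1-\epsilon)\log x$ once one additionally restricts to $\lambda h/\varphi(q) \geq x^{1-\eta^2}$ (so that $\log q, \log\tfrac{x}{h} \ll \eta^2\log x$). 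This produces a bound of the form $((1-\beta_1)\log x)^{\Cr{DH-3}/\eta}$ whose exponent $\Cr{DH-3}/\eta$ grows without bound as $\eta\to 0$, so no lower bound on $\Cr{DH}$ is needed. Second, even with the freedom to raise the power, the paper calibrates $\eta$ to get exactly $((1-\beta_1)\log q)^2$ (times logarithmic factors) and then combines this with the Vinogradov--Korobov estimate \eqref{eqn:sup} via $\min\{u,v\}\leq u^{1/2}v^{1/2}$, extracting one factor of $(1-\beta_1)\log q$ and one factor of the VK exponential. Your approach misses both the extra parameter $\eta$ (which requires further shrinking the range beyond what $\epsilon$ alone allows) and the geometric-mean step, and so cannot be repaired without importing at least the second of these ideas.
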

\begin{proof}
Let $\theta$ be defined as in \cref{thm:main} and let $0<\epsilon<1-\theta$. Without loss of generality, we may assume that $\lambda h/\varphi(q) \geq x^{\theta+\epsilon}$ and that $q$ is sufficiently large.  The Deuring--Heilbronn zero-free region \eqref{eqn:DH} implies that there exists an absolute and effectively computable constant $\Cl[abcon]{DH2}>0$ such that $\delta(t) \geq (\Cr{DH2}\log^+(\frac{1}{(1-\beta_1)\log(qt)}))/(\log(qt)) \geq (\Cr{DH2}\log^+(\frac{1}{(1-\beta_1)\log x}))/(10\log (qt))$ for $e \leq t \leq \frac{x}{q}$. Consequently, if $e \leq t \leq \frac{x}{q}$, then
	\[
	\frac{x^{-\epsilon^2 \delta(t)}}{\sqrt{t}} \leq \sqrt{q} \exp\Big( - \frac{\epsilon^2 \Cr{DH2}\log^+\big(\frac{1}{(1-\beta_1)\log x}\big) \log x}{10\log (qt)} - \frac{1}{2}\log(qt)\Big).
	\]
The maximum value of this exponential over $t \in [\frac{ex}{h}, \infty)$ occurs at a value $t=t_1$ such that either $t_1 = ex/h$, or $\log (qt_1) \asymp_{\epsilon} ( \log^+(\frac{1}{(1-\beta_1)\log x}) \log x )^{1/2}$ and $t_1 \geq ex/h$.  Therefore, there exists an effectively computable constant $\Cl[abcon]{DH-2} = \Cr{DH-2}({\epsilon})>0$ such that if  $\lambda h/\varphi(q) \geq x^{\theta+\epsilon}$, then
	\begin{equation}
	\label{eqn:sup-DH}
	\begin{aligned}
	& \sqrt{\frac{x}{h}} \frac{\log x}{\log q} \sup\Big\{ \frac{x^{-\epsilon^2 \delta(t)}}{\sqrt{t}}\colon  \frac{ex}{h} \leq t \leq \frac{x^{(1-\theta)(1-\epsilon)}}{q} \Big\}+x^{-\epsilon\theta/2} \\
	&\ll_{\epsilon}  \frac{\log x}{\log q} \exp\Big( - \frac{\Cr{DH-2}   \log^+(\tfrac{1}{(1-\beta_1)\log x}) \log x  }{\log\frac{qx}{h} + \big( \log^+(\tfrac{1}{(1-\beta_1)\log x} ) \log x \big)^{1/2}  } \Big) + x^{-\epsilon \theta/2}.
	\end{aligned}
	\end{equation}
	
	This estimate, in conjunction with \eqref{eqn:sup}, leads to the best bound that \cref{thm:FlexiblePNT} and \eqref{eqn:DH} allow. We slightly weaken this bound for easy comparison with \eqref{eqn:Gal_bound}.  Let $0 < \eta < 1-\theta-\epsilon$. Henceforth, assume that $\lambda h/\varphi(q) \geq x^{1-\eta^2}$, in which case \eqref{eqn:lambda_eff} implies
	\[
	\log q \leq \log(qx/h) \leq (1+\epsilon) \log ( \varphi(q) x/h) + O_{\epsilon}(1) \leq (1+\epsilon) \eta^2 \log x + O_{\epsilon}(1). 
	\] 
	Using $(1-\beta_1) \log q \gg q^{-1/2}$  in \eqref{eqn:Siegel}, we obtain the bounds
	\[
	\log^+\Big(\frac{1}{(1-\beta_1)\log x} \Big)  \leq \log^+\Big(\frac{1}{(1-\beta_1)\log q} \Big)  + O(1) \ll \log q, \qquad x^{-1} \ll \big( (1-\beta_1)\log q)^{1/\eta^2}. 
	\]
	Collecting these estimates, we deduce that there exists  effectively computable constant $\Cr{DH-3} = \Cr{DH-3}(\epsilon)>0$ such that \eqref{eqn:sup-DH} is $\ll_{\eta,\epsilon} \frac{\log x}{\log q}    ( (1-\beta_1) \log x )^{\Cl[abcon]{DH-3}/\eta} =   ( \frac{\log x}{\log q}  )^3 ( (1-\beta_1) \log q)^2$.  The last equality follows upon fixing   $\eta = \frac{\Cr{DH-3}}{2}$. Taking the minimum of this estimate for \eqref{eqn:sup-DH} and the unconditional bound \eqref{eqn:sup}, the result follows since $\min\{u,v\} \leq u^{1/2} v^{1/2}$ for $u,v > 0$.  Powers of $\frac{\log x}{\log q}$ are absorbed by the exponential when $\Cr{Gal_cor2}$ is chosen to be suitably small. 
\end{proof}

\section{A flexible prime number theorem:  Proof of \cref{thm:FlexiblePNT} }
\label{sec:flexible_pnt}

We provide the details in the most complicated case where $\beta_1$ exists using $N_q^*(\sigma,T)$ and $\theta=\frac{32}{37}$.  All implied constants here are effectively computable.    We begin by proving \eqref{eqn:lambda_eff}.

\begin{lemma} \label{lem:lambda_bound}
	If $\beta_1$ exists and $4\leq x^{1/2}\leq h\leq x$, then $\frac{1}{8}\min\{1,(1-\beta_1)\log x\}<\lambda<2$.
\end{lemma}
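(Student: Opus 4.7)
The plan is to handle the bounds $\lambda < 2$ and $\lambda > \tfrac{1}{8}\min\{1,(1-\beta_1)\log x\}$ separately, with the sign of $\chi_1(a) \in \{-1,+1\}$ (using $\gcd(a,q)=1$) deciding which direction is nontrivial. Write $\alpha := 1-\beta_1$ and $y := x-h$, so $\lambda = 1 - \chi_1(a)A$ where $A := (x^{\beta_1}-y^{\beta_1})/(h\beta_1) > 0$. The main inputs will be McCurley's bound $\alpha \leq 1/(50\log q) \leq 1/(50\log 2) < 0.029$ together with $x \geq h \geq 4$, which the hypothesis entails.

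For the upper bound $\lambda < 2$, the case $\chi_1(a) = 1$ gives $\lambda < 1$ at once. When $\chi_1(a) = -1$, I would apply the subadditivity inequality $x^{\beta_1} \leq y^{\beta_1} + h^{\beta_1}$ (valid for $\beta_1 \in (0,1]$) to obtain $A \leq h^{-\alpha}/\beta_1$; the inequality $h^{-\alpha}/\beta_1 < 1$ then reduces to $\log h > -\log(1-\alpha)/\alpha$. A tail estimate $-\log(1-\alpha)/\alpha \leq 1 + \alpha/(2(1-\alpha))$ places the right side below $1.015$, which in turn is less than $\log 4 \leq \log h$.

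For the lower bound, the case $\chi_1(a) = -1$ yields $\lambda > 1 > \tfrac{1}{8}$ immediately, so the real work lies in the case $\chi_1(a) = +1$. The key algebraic identity is
\[
(x-y)x^{\beta_1-1} - (x^{\beta_1}-y^{\beta_1}) \;=\; y\bigl(y^{\beta_1-1} - x^{\beta_1-1}\bigr) \;\geq\; 0,
\]
which holds since $\beta_1-1 < 0$ and $y \leq x$. It produces the uniform bound $A \leq x^{-\alpha}/\beta_1$, or equivalently, via the mean value theorem applied to $f(t) = t^{\beta_1}$, $\lambda = 1 - \xi^{-\alpha}$ at some $\xi \geq \beta_1^{1/\alpha}x \geq xe^{-1.015}$; the bound is tight at $y = 0$.

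I would then split according to whether $u := \alpha\log x$ is at least $1$. If $u \geq 1$, then $\xi^{-\alpha} \leq e^{1.015\alpha}e^{-u} < 1.03/e < 0.4$, so $\lambda > 0.6 > \tfrac{1}{8}$. If $u < 1$, set $v := \alpha\log\xi$, which lies in $(0,u) \subset (0,1)$ because $\xi > 1$ (ensured by $x \geq 4 > e^{1.015}$), and apply the elementary convexity bound $1 - e^{-v} \geq v/2$ on $[0,1]$:
\[
\lambda \;\geq\; \tfrac{v}{2} \;\geq\; \tfrac{u}{2}\Bigl(1 - \tfrac{1.015}{\log x}\Bigr) \;\geq\; \tfrac{u}{2}\Bigl(1 - \tfrac{1.015}{\log 4}\Bigr) \;>\; \tfrac{u}{8}.
\]
The main obstacle I anticipate is that the stated constant $\tfrac{1}{8}$ is nearly sharp at the corner $x = 4$ with $\alpha$ close to $1/(50\log 2)$, so the tail-bound constants in both halves must be tracked carefully; away from this boundary the estimates carry substantial slack.
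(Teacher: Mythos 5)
Your proof is correct, and its overall architecture mirrors the paper's: rewrite $\lambda = 1 - \chi_1(a)\xi^{\beta_1-1}$ via the integral mean value theorem, establish a lower bound $\xi \geq \beta_1^{1/(1-\beta_1)}x \geq e^{-1.015}x$ from McCurley's bound $\beta_1 > 1-\frac{1}{50\log q}$, and then case-split on $\chi_1(a)\in\{\pm1\}$ and on whether $(1-\beta_1)\log x$ exceeds $1$. Where you genuinely diverge is in how the key bounds on $A=(x^{\beta_1}-y^{\beta_1})/(h\beta_1)$ are obtained: you avoid the paper's explicit closed form $\xi = x\bigl(\beta_1\tfrac{h/x}{1-(1-h/x)^{\beta_1}}\bigr)^{1/(1-\beta_1)}$ entirely, deducing $A\leq x^{\beta_1-1}/\beta_1$ from the monotonicity identity $(x-y)x^{\beta_1-1}-(x^{\beta_1}-y^{\beta_1})=y(y^{\beta_1-1}-x^{\beta_1-1})\geq 0$ and $A\leq h^{\beta_1-1}/\beta_1$ from subadditivity of $t\mapsto t^{\beta_1}$. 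This is cleaner and more elementary, and gives a numerically equivalent lower bound for $\xi$ to the paper's $\xi>x/3$. In the final step the paper uses the Taylor expansion of $t/(1-e^{-t})$ and splits on $\nu(\xi)=\min\{1,(1-\beta_1)\log\xi\}$, whereas you use the chord bound $1-e^{-v}\geq v/2$ on $[0,1]$ and split on $u=(1-\beta_1)\log x$; both squeeze through with essentially the same thin margin at $x=4$ (your $\lambda\geq 0.134\,u$ versus the required $0.125\,u$). One cosmetic note: the hypothesis $h\geq x^{1/2}$ is not actually used by your argument (nor by the paper's), only $4\leq h\leq x$.
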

\begin{proof}
If $\beta_1$ exists, then $\beta_1\geq 1-\frac{1}{13\log(3q)}$ per the aforementioned work of McCurley \cite{McCurley}.  If $q\leq 2\times 10^6$ and $\chi\pmod{q}$ is primitive, then $L(s,\chi)$ has no real nontrivial zeros \cite{Platt}.  Therefore, if $\beta_1$ exists, then $\frac{99}{100}<\beta_1<1$.  If $\xi\in(x-h,x)$ satisfies \eqref{eqn:lambda_MVT}, then we have the explicit equality $\xi = x(\frac{\beta_1 h/x}{1-(1-h/x)^{\beta_1}})^{1/(1-\beta_1)}$.  The bounds $\frac{x}{3}<\xi<x$ follow from our ranges for $\beta_1$ and $h$.  Since $x\geq 4$, we have $\xi>1$ and $\log x \leq 5\log\xi$, which we will use repeatedly.

To bound $\lambda$ from above, we have $\lambda = 1-\chi_1(a)\xi^{\beta_1-1} < 1+(x/3)^{\beta_1-1}<2$.  For lower bounds, we use casework.  If $\chi_1(a)=-1$, then $\min\{1,(1-\beta_1)\log x\}\leq 1< 1+\xi^{\beta_1-1}=\lambda$.  Recall that $\nu(U)=\min\{1,(1-\beta_1)\log U\}$, so $0<\nu(\xi)\leq 1$.  If $\chi_1(a)=1$ and $\nu(\xi)=1$, then
\[
	\min\{1,(1-\beta_1)\log x\}\leq 5\nu(\xi)=5<5(1-e^{-1})\leq 5(1-\xi^{\beta_1-1})=5\lambda.
\]
If $\chi_1(a)=1$ and $0<\nu(\xi)<1$, then $\lambda=1-e^{-\nu(\xi)}$, and the bound
\[
\min\{1,(1-\beta_1)\log x\}\leq 5\nu(\xi)<5\Big(1+\frac{\nu(\xi)}{2}+\frac{\nu(\xi)^2}{12}\Big)(1-e^{-\nu(\xi)})<\frac{95}{12}(1-e^{-\nu(\xi)})<8\lambda
\]
follows from the Taylor expansion of $\frac{t}{1-e^{-t}}$ at $t=0$.  These cases exhaust all possibilities.
\end{proof}

\begin{proof}[Proof of \cref{thm:FlexiblePNT}]
It suffices to consider $h\leq x-1$.  Fix $0<\epsilon<1-\theta$, and assume that
\begin{equation}
	\label{eqn:assumption_range}
	\lambda h/\varphi(q)\geq x^{\theta+\epsilon}.
\end{equation}
The explicit formula \cite[Ch. 17--20]{Davenport} implies that if $q,T,x\geq 2$, then
\begin{multline*}
	\sum_{k=1}^{\infty}\sum_{\substack{p^k\leq x \\ p\equiv a\pmod{q}}}\log p = \frac{x}{\varphi(q)}-\frac{\chi_1(a)}{\varphi(q)}\frac{x^{\beta_1}}{\beta_1}-\sum_{\chi}\frac{\bar{\chi}(a)}{\varphi(q)}\Big(\sideset{}{'}\sum_{\substack{ \beta>0 \\ |\gamma|\leq T}}\frac{x^{\rho}}{\rho}-\sum_{|\gamma|<1}\frac{1}{\rho}\Big)\\
	+O\Big( \frac{1}{\varphi(q)}\Big(\frac{x\log x+T}{T}\log q + \log x + \frac{x(\log x)(\log T)}{T}\Big)+(\log x)(\log q)+\frac{x(\log x)^2}{T}\Big),
\end{multline*}
where $\sum_{\chi}$ is a sum over characters $\chi \pmod{q}$  and $\sum'$ is a sum over non-trivial zeros $\rho=\beta+i\gamma\neq\beta_1$ of $L(s,\chi)$.  The contribution from $p^k\leq x$ with $k\geq 2$ is trivially bounded by $\ll\sqrt{x}$ using Chebyshev's bound $\sum_{p\leq x}\log p\ll x$, leading to the equation
\begin{multline}
\label{eqn:explicit_formula}
	\sum_{\substack{p\leq x \\ p\equiv a\pmod{q}}}\log p = \frac{x}{\varphi(q)}-\frac{\chi_1(a)}{\varphi(q)}\frac{x^{\beta_1}}{\beta_1}-\sum_{\chi}\frac{\bar{\chi}(a)}{\varphi(q)}\sideset{}{'}\sum_{\substack{ \beta>0 \\ |\gamma|\leq T}}\Big(\frac{x^{\rho}}{\rho}-\sum_{|\gamma|<1}\frac{1}{\rho}\Big)\\
	+O\Big( \frac{1}{\varphi(q)}\Big(\frac{x\log x+T}{T}\log q + \log x + \frac{x(\log x)(\log T)}{T}\Big)+(\log x)(\log q)+\frac{x(\log x)^2}{T}+\sqrt{x}\Big).
\end{multline}
We apply \eqref{eqn:explicit_formula} twice with $T=x^{(1-\theta)(1-\epsilon)}$ (once with $p\leq x$ and once with $p\leq x-h$).  Our assumption \eqref{eqn:assumption_range} then yields
\begin{align*}
	 \cE := \Big|\frac{\lambda h}{\varphi(q)}-\sum_{\substack{x-h<p\leq x \\ p\equiv a\pmod{q}}}\log p\Big|
		  \ll_{\epsilon} \frac{1}{\varphi(q)}\sum_{\chi}~\sideset{}{'}\sum_{\substack{\beta>0 \\ |\gamma|\leq x^{(1-\theta)(1-\epsilon)}}} \Big|\frac{x^{\rho}-(x-h)^{\rho}}{\rho} \Big| +  \frac{\lambda h}{\varphi(q)} x^{-\epsilon\theta/2}.
\end{align*}

Recall that $h\leq x-1$.  Using the bounds
\begin{align*}
\Big|\frac{x^{\rho}-(x-h)^{\rho}}{\rho}\Big|=\Big|\int_{x-h}^x \tau^{\rho-1}d\tau\Big|\ll \int_{x-h}^x \tau^{\beta-1}d\tau\ll h(x-h)^{\beta-1},\qquad \Big|\frac{x^{\rho}-(x-h)^{\rho}}{\rho}\Big|\ll \frac{x^{\beta}}{|\rho|}
\end{align*}
as well as Taylor's theorem for $\frac{x^s-(x-h)^s}{s}$ when $|s|\leq\frac{1}{\log x}$, we find that
\[
\Big|\frac{x^{\rho}-(x-h)^{\rho}}{\rho}\Big|\ll\begin{cases}
	hx^{\beta-1}\log x&\mbox{if $0<\beta<\frac{1}{4}$ and $|\gamma|\leq ex/h$,}\\
	hx^{\beta-1}&\mbox{if $\beta\geq\frac{1}{4}$ and $|\gamma|\leq ex/h$,}\\
	x^{\beta}/|\gamma|&\mbox{if $|\gamma|>ex/h$.}
\end{cases}
\]
(Separately consider the cases where $h\leq \frac{x}{2}$ and $\frac{x}{2}<h\leq x-1$.)  For $t\in\mathbb{R}$, there are $\ll\log(q(|t|+1))$ zeros of $L(s,\chi)$ such that $|t-\gamma|\leq 1$ \cite[\textsection16]{Davenport}.  Once we use this to trivially estimate the contribution from the $\rho$ with $\beta\leq \theta+\epsilon-\epsilon\theta$, it follows from \eqref{eqn:assumption_range} that
\begin{equation}
\label{eqn:pre_decomp}
	\cE
		\ll_{\epsilon} \frac{1}{\varphi(q)}\sum_{\chi}~\sideset{}{'}\sum_{\substack{\beta > \theta+\epsilon-\epsilon\theta \\ |\gamma|\leq ex/h}} \frac{x^{\beta}}{x/h}+ \frac{1}{\varphi(q)}\sum_{\chi}~\sideset{}{'}\sum_{\substack{\beta > \theta+\epsilon-\epsilon\theta \\ ex/h <|\gamma|\leq x^{(1-\theta)(1-\epsilon)} } } \frac{x^{\beta}}{|\gamma|}+ \frac{\lambda h}{\varphi(q)} x^{-\epsilon\theta/2}.
\end{equation}
We dyadically decompose $[ex/h, x^{(1-\theta)(1-\epsilon)}]$ into $O(\log x)$ subintervals, so that \eqref{eqn:pre_decomp} leads to
\begin{equation}
\label{eqn:post_decomp}
\cE	\ll_{\epsilon} \frac{1}{\varphi(q)} \sum_{k=\lceil \log(ex/h) \rceil}^{\lceil (1-\theta)(1- \epsilon) \log x \rceil}   \Big(  \sum_{\chi}~\sideset{}{'}\sum_{\substack{ \beta > \theta+\epsilon-\epsilon\theta \\ |\gamma|\leq e^k}}\frac{x^{\beta}}{e^k} \Big)+   \frac{\lambda h}{\varphi(q)} x^{-\epsilon\theta/2}.
\end{equation}

We claim that if $ex/h \leq T \leq ex^{(1-\theta)(1-\epsilon)}$, then
\begin{equation} \label{eqn:DyadicClaim}
	 \sum_{\chi}~\sideset{}{'}\sum_{\substack{\beta > \theta+\epsilon-\epsilon\theta  \\ |\gamma|\leq T}}\frac{x^{\beta}}{T} 
	 \ll_{\epsilon}    \lambda  \frac{\log x}{\log q}\Big(   \frac{ x}{\sqrt{T}}\sup\Big\{ \frac{x^{-\epsilon^2 \delta(t)}}{\sqrt{t}}\colon  \frac{ex}{h} \leq t \leq \frac{x^{(1-\theta)(1-\epsilon)}}{q} \Big\}  +  q  x^{\theta+\epsilon-\epsilon\theta} \log x  \Big).\hspace{-2.57mm}
\end{equation}
Assuming this claim, we sum over $k$, applying \eqref{eqn:post_decomp} and \eqref{eqn:DyadicClaim}, to obtain
\[
\mathcal{E} \ll_{\epsilon} \frac{\lambda h}{\varphi(q)} \Big( \sqrt{\frac{x}{h}}\frac{\log x}{\log q}  \sup\Big\{ \frac{x^{-\epsilon^2 \delta(t) }}{\sqrt{t}}\colon  \frac{ex}{h} \leq t \leq \frac{x^{(1-\theta)(1-\epsilon)}}{q} \Big\}  + \frac{q x^{\theta+\epsilon-\epsilon\theta} \log x}{h} + x^{-\epsilon\theta/2} \Big).
\]
Notice that $x^{\theta+\epsilon} \leq \frac{\lambda h}{\varphi(q)} < \frac{2 h}{\varphi(q)}$ by \cref{lem:lambda_bound} and \eqref{eqn:assumption_range}. Also, since $h \leq x$, it follows that $\log q \ll_{\epsilon} \log x$, hence $q \ll_{\epsilon} \varphi(q) x^{\epsilon\theta/4}$. Overall, this shows that $qx^{\theta+\epsilon-\epsilon\theta}\log x \ll_{\epsilon} h x^{-\epsilon\theta/2}$.  This establishes \cref{thm:FlexiblePNT} subject to our claim, which we now prove.

Let $ex/h \leq T \leq ex^{(1-\theta)(1-\epsilon)}$. By partial summation and \cref{thm:LFZDE}, we find that
\begin{equation*}
\begin{aligned}
 \sum_{\chi}~\sideset{}{'}\sum_{\substack{\beta >\theta+\epsilon-\epsilon\theta  \\ |\gamma|\leq T}}\frac{x^{\beta}}{T}  
 & \leq  \frac{x^{\theta+\epsilon-\epsilon\theta}}{T} N_q^*(\theta+\epsilon-\epsilon\theta,T)  +\frac{x \log x}{T} \int_{\theta+\epsilon-\epsilon\theta}^{1-\delta(T)}N_q^*(\sigma,T)x^{\sigma-1}\mathrm{d}\sigma \\
 & \ll_{\epsilon}  \nu(qT)\Big( \frac{x^{\theta+\epsilon-\epsilon\theta}}{T} (qT)^{(\frac{1+\epsilon}{1-\theta})(1-\theta-\epsilon+\epsilon\theta)}  +\frac{x \log x}{T} \int_{\theta+\epsilon-\epsilon\theta}^{1-\delta(T)} \Big( \frac{(qT)^{\frac{1+\epsilon}{1-\theta} }}{x} \Big)^{1-\sigma}\mathrm{d}\sigma \Big) \\
 & \ll_{\epsilon} \nu(qT)    \Big( q^{1-\epsilon^2}  x^{\theta+\epsilon-\epsilon\theta} \log x    +  \frac{x \log x}{|\log(x/(qT)^{\frac{1+\epsilon}{1-\theta}})| + 1} \cdot \frac{1}{T} \Big( \frac{(qT)^{\frac{1+\epsilon}{1-\theta}}}{x} \Big)^{\delta(T)} \Big).
\end{aligned}
\end{equation*}
Since $T \leq ex^{(1-\theta)(1-\epsilon)}$ and $\log q\ll_{\epsilon}\log x$, the definition of $\nu$ implies that $\nu(qT)\ll_{\epsilon}\nu(q)\frac{\log x}{\log q}$.  The bounds for $\lambda$ in \eqref{eqn:lambda_eff} and the inequality $q^{1-\epsilon^2}<q$ then imply that
\begin{equation}
\label{eqn:lambda_application}
 \sum_{\chi}~\sideset{}{'}\sum_{\substack{\beta >\theta+\epsilon-\epsilon\theta  \\ |\gamma|\leq T}}\frac{x^{\beta}}{T}  \ll_{\epsilon} \lambda \frac{\log x}{\log q}\Big( q x^{\theta+\epsilon-\epsilon\theta} \log x    +  \frac{x \log x}{|\log(x/(qT)^{\frac{1+\epsilon}{1-\theta}})| + 1} \cdot \frac{1}{T} \Big( \frac{(qT)^{\frac{1+\epsilon}{1-\theta}}}{x} \Big)^{\delta(T)} \Big). 
\end{equation}
\noindent
To estimate the second term in the parentheses, consider cases depending on the size of $T$.  First, suppose that $x^{(1-\theta)(1-\epsilon)}/q \leq T \leq ex^{(1-\theta)(1-\epsilon)}$.  In this range, the ratio $(qT)^{\frac{1+\epsilon}{1-\theta}}/x$ may exceed $1$.  Since $0 \leq \delta(T) \leq 1-\theta-\epsilon < (1-\theta)(1-\epsilon)$, this implies that
\begin{align*}
\frac{x \log x}{|\log(x/(qT)^{\frac{1+\epsilon}{1-\theta}})| + 1} \frac{1}{T} \Big( \frac{(qT)^{\frac{1+\epsilon}{1-\theta}}}{x} \Big)^{\delta(T)} 
& \ll 	 \frac{x \log x}{T} \min\Big[1, \Big( \frac{(qT)^{\frac{1+\epsilon}{1-\theta}}}{x} \Big)^{(1-\theta)(1-\epsilon)}  \Big] \\
& \ll_{\epsilon}  \min\Big[ \frac{x\log x}{T} , q^{1-\epsilon^2} x^{\theta+\epsilon-\epsilon\theta} T^{-\epsilon^2} \Big] 
\ll_{\epsilon}   q x^{\theta+\epsilon-\epsilon\theta} \log x, 
\end{align*}
because $T \geq x^{(1-\theta)(1-\epsilon)}/q$ and $T \geq e$ by assumption. This proves the claim \eqref{eqn:DyadicClaim} when  $x^{(1-\theta)(1-\epsilon)}/q \leq T \leq ex^{(1-\theta)(1-\epsilon)}$. Finally, if $ex/h \leq T < x^{(1-\theta)(1-\epsilon)}/q$, then 
{\small\[
\frac{(x/T) \log x}{|\log(x/(qT)^{\frac{1+\epsilon}{1-\theta}})| + 1} \Big( \frac{(qT)^{\frac{1+\epsilon}{1-\theta}}}{x} \Big)^{\delta(T)} 
\ll_{\epsilon} 	 \frac{x^{1-\epsilon^2 \delta(T)} }{T} \ll_{\epsilon} \frac{x}{\sqrt{T}}\sup\Big\{ \frac{x^{-\epsilon^2 \delta(t)}}{\sqrt{t}}\colon  \frac{ex}{h} \leq t \leq \frac{x^{(1-\theta)(1-\epsilon)}}{q} \Big\}. 
\]}%
These observations and \eqref{eqn:lambda_application} complete the proof of the claim.
\end{proof}

\cref{remark1} follows by using \eqref{eqn:lambda_asymp} instead of \eqref{eqn:lambda_eff} to deduce \eqref{eqn:lambda_application}.  \cref{remark2} follows by using the bound for $N_q(\sigma,T)$ in \cref{thm:LFZDE} instead of $N_q^*(\sigma,T)$ in our proof of \cref{thm:FlexiblePNT}; the point is that $\nu(qT)\asymp 1$ when there exists a fixed constant $\Cr{relative}>0$ such that $\beta_1\leq 1-\frac{\Cr{relative}}{\log q}$.

\bibliographystyle{abbrv}
\bibliography{Uniform_PNTAP_7.bib}

\begin{thebibliography}{10}

\bibitem{Bombieri2}
E.~Bombieri.
\newblock Le grand crible dans la th\'eorie analytique des nombres.
\newblock {\em Asterisque}, (18):103, 1987.

\bibitem{Davenport}
H.~Davenport.
\newblock {\em Multiplicative number theory}, volume~74 of {\em Graduate Texts
  in Mathematics}.
\newblock Springer-Verlag, New York, third edition, 2000.
\newblock Revised and with a preface by Hugh L. Montgomery.

\bibitem{Gallagher}
P.~X. Gallagher.
\newblock A large sieve density estimate near {$\sigma =1$}.
\newblock {\em Invent. Math.}, 11:329--339, 1970.

\bibitem{Hoheisel}
G.~Hoheisel.
\newblock Primzahl probleme in der {A}nalysis.
\newblock {\em S.-B. Preuss. Akad. Wiss.}, 8:580--588, 1930.

\bibitem{Huxley}
M.~N. Huxley.
\newblock Large values of {D}irichlet polynomials. {III}.
\newblock {\em Acta Arith.}, 26(4):435--444, 1974/75.

\bibitem{Iwaniec}
H.~Iwaniec.
\newblock On zeros of {D}irichlet's {$L$} series.
\newblock {\em Invent. Math.}, 23:97--104, 1974.

\bibitem{Jutila}
M.~Jutila.
\newblock On {L}innik's constant.
\newblock {\em Math. Scand.}, 41(1):45--62, 1977.

\bibitem{Koukoulopoulos}
D.~Koukoulopoulos.
\newblock Pretentious multiplicative functions and the prime number theorem for
  arithmetic progressions.
\newblock {\em Compos. Math.}, 149(7):1129--1149, 2013.

\bibitem{Linnik}
U.~V. Linnik.
\newblock On the least prime in an arithmetic progression. {I}. {T}he basic
  theorem.
\newblock {\em Rec. Math. [Mat. Sbornik] N.S.}, 15(57):139--178, 1944.

\bibitem{McCurley}
K.~S. McCurley.
\newblock Explicit zero-free regions for {D}irichlet {$L$}-functions.
\newblock {\em J. Number Theory}, 19(1):7--32, 1984.

\bibitem{Montgomery_ten}
H.~L. Montgomery.
\newblock {\em Ten lectures on the interface between analytic number theory and
  harmonic analysis}, volume~84 of {\em CBMS Regional Conference Series in
  Mathematics}.
\newblock Published for the Conference Board of the Mathematical Sciences,
  Washington, DC; by the Amer. Math. Soc., Providence, RI, 1994.

\bibitem{MV_large_sieve}
H.~L. Montgomery and R.~C. Vaughan.
\newblock The large sieve.
\newblock {\em Mathematika}, 20:119--134, 1973.

\bibitem{Pintz}
J.~Pintz.
\newblock Elementary methods in the theory of {$L$}-functions. {II}. {O}n the
  greatest real zero of a real {$L$}-function.
\newblock {\em Acta Arith.}, 31(3):273--289, 1976.

\bibitem{Platt}
D.~J. Platt.
\newblock Numerical computations concerning the {GRH}.
\newblock {\em Math. Comp.}, 85(302):3009--3027, 2016.

\bibitem{Swaenepoel}
C.~Swaenepoel.
\newblock Prime numbers with a positive proportion of preassigned digits.
\newblock {\em Proc. Lond. Math. Soc. (3)}, 121(1):83--151, 2020.

\end{thebibliography}

\end{document}